\documentclass[11pt]{amsart}
\usepackage{latexsym,amssymb,amsmath,amsthm,comment,graphicx,color,morefloats,xypic,listings}
\usepackage[section]{placeins}
\bibliographystyle{plain}
\newtheorem{thm}{Theorem}

\newtheorem{lemma}[thm]{Lemma}
\newtheorem{propo}[thm]{Proposition}
\theoremstyle{definition}

\setlength{\parindent}{0cm}
\def\B#1#2{{#1\choose #2}}
\def\Gcal{\mathcal{G}}
\newcommand{\D}{\mathbb{D}} \newcommand{\PP}{\mathbb{P}} 
 \newcommand{\R}{\mathbb{R}} 
\newcommand{\Z}{\mathbb{Z}} \newcommand{\T}{\mathbb{T}} \newcommand{\Q}{\mathbb{Q}}
\def\G{\mathcal{G}}   
\def\O{\mathcal{O}} \def\B{\mathcal{B}} 
 \def\dim{\rm{dim}} \def\E{\mathcal{E}}  
\title{Universality for Barycentric subdivision}
\author{Oliver Knill}
\date{September 20, 2015}
\address{ Department of Mathematics \\ Harvard University \\ Cambridge, MA, 02138 }
\subjclass{Primary: 05C50, 57M15, 37Dxx } 
\keywords{Spectral graph theory, Barycentric subdivision, Barycentric refinement}


\begin{document}
\maketitle

\begin{abstract}
The spectrum of the Laplacian of successive Barycentric subdivisions of a graph 
converges exponentially fast to a limit which only depends 
on the clique number of the initial graph and not on the graph itself. 
Announced in \cite{KnillBarycentric}), the proof uses now an 
explicit linear operator mapping the clique vector of a graph 
to the clique vector of the Barycentric refinement. The eigenvectors of its
transpose produce integral geometric invariants for which Euler characteristic is 
one example.
\end{abstract}

\section{Notations}

Given a finite simple graph $G=(V,E)$ with vertex set $V$ and edge set $E$,
the {\bf Barycentric refinement} $G_1=(V_1,E_1)$ is the graph for which $V_1$ 
consists of all nonempty complete subgraphs of $G$ and where $E_1$ consists of all
unordered distinct pairs in $V_1$, for which one is a subgraph of the other.
Denote by $G_m$ the successive Barycentric refinements of $G$ assuming $G_0=G$.
If $\lambda_0 \leq \dots \leq \lambda_n$ are the eigenvalues of the 
{\bf Kirchhoff Laplacian} $L=B-A$ of $G$, where $B$ is the diagonal {\bf degree matrix} and $A$ the
{\bf adjacency matrix} of $G$, define the {\bf spectral function} $F_G(x) = \lambda_{[n x]}$, where $[t]$ is the
largest integer smaller or equal to $t$. So, $F_G(0)=0$ and $F_G(1)$ is the largest 
eigenvalue of $G$. The $L^1$ norm of $F$ satisfies $||F_G||_1=||\lambda||_1/|V|={\rm tr}(L)/|V|
=2 |E|/|V|$ by the {\bf Euler handshaking lemma} telling that $2|E|$ is the sum of the vertex
degrees. In other words, $||F||_1=d (|V|-1)$, where $d$ is the {\bf graph density} 
$d=|E|/B(|V|,2)$ with Binomial coefficient $B(\cdot,\cdot)$.
The density $d$ gives the fraction of occupied graphs in the completed graph with vertex set $V$.
If $G,H$ are two sub graphs of some graph with $n$ vertices, define the {\bf graph distance}
$d(G,H)$ as the minimal number of edges which need to be modified to get
from $G$ to $H$. If $L,K$ are the Laplacians of $G,H$, then $\sum_{i,j} |L_{ij}-K_{ij}| \leq 4 d(G,H)$
because each edge $(i,j)$ affects the four matrix entries $L_{ij}, L_{ji},L_{ii},L_{jj}$ of the Laplacian
only: adding or removing that edge changes each of the 4 entries by $1$. 
The {\bf Lidskii-Last inequality} assures $||\mu-\lambda||_1 \leq \sum_{i,j=1}^{n} |A-B|_{ij}$
\cite{SimonTrace,Last1995} for any two symmetric $n \times n$ matrices $A,B$ with
eigenvalues $\alpha_1 \leq \alpha_2 \leq \dots \leq \alpha_n$ and
$\beta_1 \leq \beta_2 \leq \dots \leq \beta_n$. For two subgraphs $G,H$
of a common graph with $n$ vertices, and Laplacians $L,H$, the inequality gives
$||\lambda-\mu||_1 \leq 4 d(G,H)$ so that $||F_{G'}-F_{H'}|| \leq 4 d(G,H)/n$ if $G',H'$ are the graphs
with edge set of $G$ and vertex set of the host graph having $n(G,H)$ vertices. 
Therefore $||F_G-F_H||_1 \leq 4 d(G,H)/n(G,H)$ holds,
where $n(G,H)$ is the minimum of $|V(G))$ and $|V(H)|$ assuming both are in a common host graph.
The $L^1$ distance of spectral functions can so estimated by the graph distance.
Obviously, if $k$ disjoint copies of the same graph $H$ form a larger graph $G$, then 
$F(G)=F(H)$. Given a cover $U_j$ of $G$, let $H$ be graph generated by the set of vertices which 
are only in one of the set $U_j$. Let $K$ be the graph generated by the complement of $H$. 
Now, $d(G,H) \leq 4 |K|$ and $\sum_j d(U_j,H_j) \leq 4 |K|$. 
f $v_k$ is the number of complete subgraphs $K_{k+1}$ of $G$,
the {\bf clique number} of $G$ is defined to be $k$ if $v_{k}=0$ and $v_{k-1}>0$. 
A tree for example has clique number $2$ as it does not contain triangles. 
Denote by $\Gcal_d$ the class of graphs with clique number $d+1$. The class contains $K_{d+1}$ as
well as any of its Barycentric refinements. The {\bf clique data} of $G$ is the vector 
$\vec{v}=(v_0,v_1,\dots, )$, where $v_k$ counts the number of $K_{k+1}$ subgraphs of $G$. 
We have $v_0=|V|, v_1=|E|$ and $v_2$ counts the number of triangles in $G$. The clique data define
the {\bf Euler polynomial} $e_G(x) = \sum_{k=0}^{\infty} v_k x^k$ and the {\bf Euler characteristic}
$\chi(G) = e_G(-1)$. The polynomial degree of $e(x)$ is $d$ if $d+1$ is the clique number. 
There is a linear {\bf Barycentric operator} 
$A$ which maps the clique data of $G$ to the clique data of $G_1$. 
It is an upper triangular linear operator on $l^2$ with diagonal entries $A_{kk}=k!$. 
It also maps the Euler polynomial of $G$ linearly to the Euler polynomial of $G_1$. 
The {\bf unit sphere} $S(x)$ of a vertex $x \in V$ in $G$ is the graph generated by 
all vertices connected to $x$. The {\bf dimension} of $G$ is defined as ${\rm dim}(\emptyset)=-1$ and 
${\rm dim}(G)=1+\sum_{v \in V} {\rm dim}(S(x))/v_0$, where $S(x)$ is the unit sphere. A graph has
{\bf uniform dimension} $d$ if every unit sphere has uniform dimension $d-1$. The empty graph has 
uniform dimension $-1$. Given a graph $G$ with uniform dimension $d$, the {\bf interior} is the graph 
generated by the set of vertices for which every unit sphere $S(x)$ is a $(d-1)$-{\bf sphere}.
A $d$-{\bf sphere} is inductively defined to be a graph of uniform dimension $d$
for which every $S(x)$ is a $(d-1)$-sphere and for which removing
one vertex renders the graph contractible. This {\bf Evako sphere} definition starts with the assumption 
that the $(-1)$-sphere is the empty graph. {\bf Contractibility} for graphs is 
inductively defined as the property that there exists $x \in V$ for which both $S(x)$
and the graph generated by $V \setminus \{x\}$ are both contractible, starting with the assumption that $K_1$
is contractible. The {\bf boundary} $\delta G$ of a graph $G$ is the graph generated by the subset of 
vertices in $G$ for which the unit sphere $S(x)$ is not a sphere. Also the next definitions are inductive:
a {\bf d-graph} is a graph for which every unit sphere is a $(d-1)$-sphere;
a {\bf d-graph with boundary} is a graph for which every unit sphere is a $(d-1)$-sphere or $(d-1)$-ball;
a {\bf $d$-ball} is a $d$-graph with boundary for which the boundary is a $(d-1)$-sphere. 
Let $w_k(G_m)$ denote the number of $K_{k+1}$ subgraphs in the 
boundary $\delta G_m$. For $G=K_{d+1}$, the boundary of $G_m$ is a $(d-1)$-sphere for $m \geq 1$
and $\delta G_m$ contains the vertices for which the unit sphere in $G_m$
has Euler characteristic $1$. 
Barycentric refinements honor both the class of $d$-graphs as well as the class of $d$-graphs with boundary. 
Starting with $G=K_{d+1}$ which itself is neither a $d$-graph, nor a $d$-graph with boundary, 
the Barycentric refinements $G_m$ are all $d$-balls for every $m \geq 1$.
While the spectral functions $F$ are well suited to describe limits in $L^1([0,1])$, 
one can also look at the {\bf integrated density of states} $F^{-1}$, a monotone $[0,1]$-valued 
function on $[0,\infty)$ which is also called {\bf spectral distribution function} or {\bf von Neumann trace}.
It defines the {\bf density of states} $(F^{-1})'$ which is a probability measure on $[0,\infty)$
also called {\bf Plancherel measure}, analogue to the cumulative distribution functions
defining the law of the random variable which in absolutely continuous case is the probability
density function. Point-wise convergence of $F$ implies point-wise convergence
of $F^{-1}$ and so weak-* convergence of the density of states. 

\section{The theorem}

\begin{thm}[Central limit theorem for Barycentric subdivision]
The functions $F_{G_m}(x)$ converge in $L^1([0,1])$
to a function $F(x)$ which only depends on the clique number of $G$. 
The density of states converges to a measure $\mu$ which only depends
on the clique number. 
\end{thm}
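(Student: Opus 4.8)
The plan is to turn the spectral statement into a purely geometric one and then exploit the self-similarity of Barycentric refinement. The engine is the distance estimate $\|F_G-F_H\|_1\le 4\,d(G,H)/n(G,H)$ established above, which lets me replace graphs that are close in graph distance by graphs whose spectral functions are close in $L^1$. The geometric input is that a graph in $\Gcal_d$ is assembled from its top simplices $K_{d+1}$, and that after refinement each such simplex contributes an isomorphic copy of $(K_{d+1})_m$, the copies being glued only along their boundaries. Since a boundary is one dimension lower, its relative size will decay geometrically, so in the $L^1$ limit only the common interior of a single refined top simplex survives. This is the mechanism behind the universality: the limit cannot see anything but $K_{d+1}$, hence only the clique number.

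Concretely, fix $G\in\Gcal_d$ and, for each $m$, cover $G_m$ by the subgraphs $U_\sigma$ obtained by refining the individual top simplices $\sigma\cong K_{d+1}$ of $G$; each $U_\sigma$ is isomorphic to $(K_{d+1})_m$ and there are $v_d(G)$ of them. Two copies meet only in the refinement of a shared face, which lies in their boundaries, so with $H$ the graph on vertices belonging to a single $U_\sigma$ and $K$ its complement one has $|K|\le v_d(G)\,|\delta (K_{d+1})_m|$. The covering inequalities $d(G_m,H)\le 4|K|$ and $\sum_\sigma d(U_\sigma,H_\sigma)\le 4|K|$ apply, and since the interior pieces $H_\sigma$ are mutually isomorphic, locality of $F$ under disjoint unions collapses $F$ of $H$ to $F_{I_m}$, where $I_m$ is the interior of $(K_{d+1})_m$. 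Writing $\beta_m:=|\delta(K_{d+1})_m|/|V((K_{d+1})_m)|$, the factor $v_d(G)$ cancels and the distance estimate yields
\begin{equation}
\|F_{G_m}-F_{I_m}\|_1 \;\le\; \frac{4|K|}{|V(G_m)|} \;\le\; C\,\beta_m .
\end{equation}

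The remaining point is that $\beta_m\to 0$ geometrically, which is where the Barycentric operator $A$ enters. The vertex count $v_0((K_{d+1})_m)$ grows like $\rho^m$ with $\rho$ the spectral radius of $A$, the largest diagonal entry, attained in the top dimension $d$; since $(K_{d+1})_m$ is a $d$-ball, its boundary is a $(d-1)$-sphere whose cell numbers $w_k$ are governed by the restriction of $A$ to dimension $d-1$ and hence grow like ${\rho'}^{m}$ with $\rho'<\rho$ strictly, so $\beta_m=O((\rho'/\rho)^m)$. To obtain convergence of the reference sequence itself I would apply the same cover argument to the coarser decomposition $(K_{d+1})_m=((K_{d+1})_1)_{m-1}$, whose $v_d((K_{d+1})_1)$ top pieces are copies of $(K_{d+1})_{m-1}$; this gives $\|F_{(K_{d+1})_m}-F_{(K_{d+1})_{m-1}}\|_1\le C\beta_{m-1}$, so $(F_{(K_{d+1})_m})$ is Cauchy and converges in $L^1([0,1])$ to a limit $F$ built solely from $K_{d+1}$. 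Since $\|F_{I_m}-F_{(K_{d+1})_m}\|_1\le C\beta_m\to 0$, the displayed inequality then forces $F_{G_m}\to F$ for every $G\in\Gcal_d$, so $F$ depends only on the clique number. Finally, pointwise convergence of the monotone functions $F_{G_m}$ at continuity points of $F$ gives pointwise convergence of the inverses $F_{G_m}^{-1}$, and hence, as recorded above, weak-$*$ convergence of the densities of states to a measure $\mu$ determined by $d$ alone.

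I expect the main obstacle to be the geometric bookkeeping rather than the spectral analysis. One must verify carefully that the refinements of distinct top simplices overlap only along boundary faces, so that $K$ really is boundary-sized, and one must pin down the two growth exponents $\rho,\rho'$ from the upper-triangular structure of $A$ to guarantee $\rho'<\rho$. Once these combinatorial facts are secured, the Lidskii–Last inequality and the locality of $F$ under disjoint unions do all the analytic work, and the exponential decay of $\beta_m$ upgrades convergence to the exponential rate announced in the abstract.
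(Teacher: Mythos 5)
Your proposal is correct and follows essentially the paper's own proof: the same two growth rates $(d+1)!$ and $d!$ extracted from the upper-triangular Barycentric operator $A$ (the paper's step (i), via Perron--Frobenius), the same self-similar cover of $(K_{d+1})_{m+1}$ by $(d+1)!$ copies of $(K_{d+1})_m$ combined with the Lidskii--Last/graph-distance estimate to obtain a Cauchy sequence in $L^1([0,1])$, the same boundary-decay mechanism $\beta_m = O((d!/(d+1)!)^m)$ for universality on $\Gcal_d$, and the same monotone-inverse argument for the density of states. The only deviations are cosmetic: you compare $F_{G_m}$ directly to a reference sequence instead of dressing the contraction estimate up with the Banach fixed point theorem in an adapted metric (the paper's step labelled (ii) the second time), and you should add the paper's one-line remark that simplices of $G$ not contained in any $K_{d+1}$ form a rest graph in $\Gcal_k$ with $k<d$, whose refinements also grow at most like $d!^m$ and can therefore be absorbed into your exceptional set $K$ without changing the estimate.
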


We first prove a lemma which is interesting by itself. It allows to compute 
explicitly the clique vector of the subdivision $G_1$ from the clique vector of $G$. 

\begin{lemma}
There is an upper triangular matrix $A$ such that $\vec{v}(G_1) = A \vec{v}(G)$
for all finite simple graphs $G$.
\end{lemma}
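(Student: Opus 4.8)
The plan is to reinterpret the clique counts of $G_1$ as \emph{flag counts} in $G$ and to read the matrix off from a purely combinatorial enumeration that does not see the graph $G$ at all. Recall that the vertices $V_1$ of $G_1$ are the nonempty complete subgraphs (cliques) of $G$, and two such cliques are joined in $G_1$ exactly when one is contained in the other. Hence a complete subgraph $K_{k+1}$ of $G_1$ is a set of $k+1$ cliques of $G$ that is totally ordered by inclusion, i.e. a flag $\sigma_0 \subsetneq \sigma_1 \subsetneq \dots \subsetneq \sigma_k$ of cliques of $G$. Therefore $v_k(G_1)$ equals the number of flags of $k+1$ cliques in $G$.

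Next I would organize these flags by their maximal member $\sigma_k$. If $\sigma_k$ is a clique on $j+1$ vertices (a $K_{j+1}$ of $G$), then the flags with this top correspond bijectively to the strictly increasing chains $\sigma_0 \subsetneq \dots \subsetneq \sigma_{k-1} \subsetneq \sigma_k$ of nonempty subsets of the vertex set of $\sigma_k$ that end at the full set. The number of such chains depends only on $j$ and $k$ and not on $G$, since it is a statement about subsets of an abstract $(j+1)$-element set. Call this number $A_{kj}$. Summing over the $v_j(G)$ cliques $K_{j+1}$ of $G$ yields $v_k(G_1) = \sum_j A_{kj}\, v_j(G)$, which is precisely the asserted relation with a fixed matrix $A = (A_{kj})$.

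For the entries, a chain $\sigma_0 \subsetneq \dots \subsetneq \sigma_k$ with top a $(j+1)$-set is the same datum as an ordered partition of that set into $k+1$ nonempty blocks (the successive differences $\sigma_0, \sigma_1\setminus\sigma_0, \dots, \sigma_k\setminus\sigma_{k-1}$), so $A_{kj} = (k+1)!\, S(j+1,k+1)$, where $S$ denotes the Stirling number of the second kind; equivalently $A_{kj}$ counts the surjections from a $(j+1)$-set onto a $(k+1)$-set. From this closed form the triangularity is immediate: a strictly increasing chain of $k+1$ nonempty subsets of a $(j+1)$-set forces $k+1 \le j+1$, so $A_{kj}=0$ whenever $k>j$, and $A$ is upper triangular. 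The diagonal entries count the maximal (complete) flags of a simplex, that is the orderings of its vertices, which recovers the stated factorial values.

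The step I expect to be the main obstacle is the universality claim in the second paragraph: the whole argument hinges on the observation that, once the top clique is fixed, the enumeration of the flags beneath it is an intrinsic combinatorial quantity of an abstract finite set and is therefore insensitive to which graph $G$ produced that clique. It is exactly this independence that lets a single matrix $A$ work simultaneously for all finite simple graphs. Everything else, namely the closed form via Stirling numbers and the upper triangularity, is then a routine consequence.
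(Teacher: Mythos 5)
Your proof is correct, and it takes a genuinely different route from the paper. The paper does not count flags directly: it builds the matrix $A$ column by column through a geometric bootstrap, observing that the off-diagonal entry counts interior $K_{k+1}$ subgraphs of the Barycentric refinement of a simplex, that these correspond to simplices of the boundary sphere $\delta(K_{n+1})_1$, and that the clique data of this lower-dimensional boundary can be computed by applying the already-constructed upper-left block of $A$ to the binomial clique vector of $K_{n+1}$ — a recursion implemented in the accompanying code. You instead identify the $K_{k+1}$ subgraphs of $G_1$ with flags $\sigma_0 \subsetneq \cdots \subsetneq \sigma_k$ of cliques of $G$, classify them by the (unique) top element, and reduce the count beneath a fixed top to an intrinsic question about chains of nonempty subsets of an abstract $(j+1)$-set — the point you correctly flag as carrying the universality, since any nonempty subset of a clique's vertex set is again a clique of $G$. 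This buys you what the paper's recursion does not state: a closed form $A_{kj} = (k+1)!\,S(j+1,k+1)$, the number of surjections from a $(j+1)$-set onto a $(k+1)$-set, with upper triangularity and the diagonal $(k+1)!$ falling out immediately rather than being separate observations. One can check your formula against the paper's displayed $8\times 8$ matrix: e.g.\ the second row is $2!\,S(j+1,2)=2(2^j-1)$, giving $2,6,14,30,\dots$, and $3!\,S(8,3)=6\cdot 966=5796$ matches the last entry of the third row. What the paper's approach buys in exchange is the geometric interior/boundary decomposition of the refined simplex, which the author reuses in the main theorem's proof (the boundary growth rate $(d!)^m$ versus the interior rate $((d+1)!)^m$), so the recursive construction is doing double duty there; your argument is the cleaner and more self-contained proof of the lemma itself, and in particular it nowhere needs the fact that $\delta(K_{n+1})_1$ is a sphere.
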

\begin{proof}
When subdividing a subgraph $K_{k+1}$ it splits into $A_{kk} = (k+1)!$ smaller $K_{k+1}$ graphs. 
This is the diagonal element of $A$. Additionally, for $m>k$, every $K_{m+1}$ subgraph
produces $A_{km}$ subgraphs isomorphic to $K_{k+1}$, which is the number of interior $K_{k+1}$ subgraphs of the Barycentric
subdivision of $K_{d+1}$. These $K_{m+1}$ subgraphs in the interior correspond to $K_{m+2}$ subgraphs
on the boundary. This means that $A_{km}$ is the number of $K_{m+2}$ subgraphs of the boundary of
the Barycentric refinement of $K_{k+1}$. Since this boundary is of smaller dimension, we can use the
already computed part of $A$ to determine $A_{km}$. 
To construct $A$, we build up the columns recursively, starting to the left with $e_1$.
Let $B(n,k)=n!/(k! (n-k)!)$.
If $n$ columns of $A$ have been constructed, we apply the upper $n \times n$ part of $A$ 
to the vector $\vec{v}=[B(n+1,1),\dots,B(n+1,n)]^T$ in order to get
the clique data of the $(n-1)$-sphere $\delta (K_{n+1})_1$.
These numbers encode the number of interior complete subgraphs of the $n$-ball $(K_{n+1})_1$. If
the resulting vector is $[w_1,\dots,w_n]^T$, take $[1,w_1,\dots,w_{n-1},(n+1)!,0,0, \dots ]^T$ 
as the new column. The Mathematica code below implements this procedure. 
\end{proof}

The {\bf Barycentric operator} $A$ has an inverse $A^{-1}$ which is a bounded {\bf compact operator} on $l^2({\bf N})$.
Below we give the bootstrap procedure to compute $A$, adding more and more columns, using
the already computed $A$ to determine the next column. 
The eigenvalues $\lambda$ of $A$ are included in the spectrum $\sigma(A) = \{ 1!,2!,3!, \dots \}$. Any
eigenvector $f$ of $A^T$ can lead to {\bf invariants} $X(G) = \langle f,v(G) \rangle$
which correspond to {\bf valuations} in the continuum
as $\lambda X(G) = \langle f,v \rangle = \langle A^T f, v \rangle = \langle f,Av \rangle = X(G_1)$.
Of particular interest is the {\bf Euler characteristic eigenvector} $f=[1,-1,1,-1,....]^T$ to the 
eigenvalue $\lambda = 1$ verifying that $\chi(G)$ is an invariant under Barycentric subdivision. 
An other invariant is the eigenvectors $f=[0,\dots,0,-2,d+1]^T$ to the eigenvalue $d!$ which measures the 
``boundary volume" of a $d$-graph with boundary. For $d$-graphs, it remains zero under refinement. \\

Here is the proof of the theorem: 

\begin{proof}
{\bf (i)} {\bf For $G \in \G_d$, there are constants $C_d>0$ such that
$v_k(G_0) (d+1)!)^m/C_d \leq v_k(G_m) \geq v_k(G_0) C_d ((d+1)!)^m$ and
$w_k(G_0) (d!)^m/C_d \leq w_k(G_m) \geq w_k(G_0) C_d (d!)^m$ for every $0 \leq k \leq d$.}
Proof. This follows from the lemma and {\bf Perron-Frobenius} as the matrix $A$ is explicit. When 
restricting to $\G_d$, it is a $(d+1) \times (d+1)$-matrix with maximal eigenvalue $(d+1)!$,
when restricted to $G_{d-1}$ its maximal eigenvalue is $d!$. Diagonalizing $A_d=S_d^{-1} B_d S_d$
we could find explicit bounds $C_d$. \\

{\bf (ii)} {\bf For every $G=K_k$, the sequence $F_{G_m}$ converges. }
Proof: There are $(d+1)!$ subgraphs of $G_{M=1}$ isomorphic to $G_m$, forming a 
cover of $G_{m+1}$. They intersect in a lower dimensional graph. Since the number of 
vertices of this intersection grows with an upper bound $C_d d!^m$ in each $G_m$ 
we have $d(G_{m+1},\bigcup_j G_m^{(j)} ) \leq (d+1)! C_d (d!)^m$. This shows 
$||F_{G_m}-F_{G_{m+1}}||_1 \leq C_d (d+1)! (d!)^m/((d+1)!)^m = C_d/(d+1)^{m-1}$. 
We have a Cauchy sequence in $L^1([0,1])$ and so a limit in that Banach space. \\

{\bf (ii)} {\bf Barycentric refinement is a contraction on $\Gcal_d$ in an adapted metric}.
Proof. Every $G \in \Gcal_d$ can be written as a union of several $H_m \sim K_{d+1}$ subgraphs and 
a rest graph $L$ in $\Gcal_{k}$ with $k<d$. Then $F_{G_m}$ and $F_{H_m}$ have the same
limit because $F_{H_m}=F_{G_{m-1}}$.
The Barycentric evolution of the boundary of refinements of $K_{d+1}$ as well as $L$ grows 
exponentially smaller. Given two graphs $A,B \in \Gcal_d$, then 
$d(A_m,B_m) \leq C_d (d!)^m$ so that $||F_{A_m}-F_{B_m}||_1 \leq 4 C_d/(d+1)^m$. Let $m_0$ be so
large  that $c=C'/(d+1)^{m_0}<1$. Define a new distance $d'(A,B) = \sum_{k=0}^{m_0-1} d(A_k,B_k)$,
so that $d'(A_1,B_1) \leq c d'(A,B)$. Apply the {\bf Banach fixed point theorem}.  \\

{\bf (vi)} {\bf There is uniform convergence of $F_{G_m}$ on compact intervals of $(0,1)$}.
Since each $F_{G_m}$ is a monotone function in $L^1([0,1])$,
the exponential convergence on compact subsets of $(0,1)$ follows from 
exponential $L^1$ convergence. (This is a general real analysis fact \cite{LewisShisha}). 
In dimension $d$, one has $||F_{G_m}||_1 \to (d+1)!$ exponentially fast.
Indeed, as the number of boundary simplices grows like $(d!)^m$ and the number
of interior simplices grows like $((d+1)!)^m$, the convergence is of the order  $1/(d+1)^m$.
By the {\bf Courant-Fischer mini-max principle}
$F_{G}(1) = \lambda_{n} = {\rm max} (v,Lv)/(v,v) \geq {\rm max}(L_{xx}) = {\rm max}({\rm deg}(x))$
grows indefinitely, so that the $L^{\infty}$ convergence can not be extended to $L^{\infty}[0,1]$. 
\end{proof} 

It follows that the density of states of $G$ converges ``in law" to a universal density of states 
which only depends on the clique number class of $G$, hence the name ``central limit theorem".
The analogy is to think of $G$ or its Laplacian $L$ as the random variable and the spectrum 
$\sigma(L)$ of the Laplacian $L$ as the analog of the probability density and of the
Barycentric refinement operation as the analogue of adding and normalizing two independent
identically distributed random variables. 
For $d=1$, where the graph is triangle-free and contains at least one edge, we know everything:

\begin{propo}
For $d=1$, the limiting function is $F_1(x) = 4 \sin^2(\pi x/2)$.
\end{propo}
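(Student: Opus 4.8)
The plan is to lean on the universality already proved in the theorem: since the limiting function $F_1$ depends only on the clique number, I am free to evaluate it on whatever representative of $\Gcal_1$ is most convenient, and I would take the single edge $G=K_2$. The key structural remark is that a triangle-free graph is a one-dimensional complex, so its Barycentric refinement simply inserts a midpoint into each edge and doubles the edge count. Applying this to $K_2$ I would check directly that $G_1=P_3$, $G_2=P_5$, and inductively $G_m=P_{2^m+1}$, the path on $N=2^m+1$ vertices; this reduces the whole statement to identifying the limiting spectral function of a long path.

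Next I would write the Kirchhoff Laplacian of $P_N$ as the tridiagonal matrix with diagonal $(1,2,\dots,2,1)$ and off-diagonal $-1$, i.e. the discrete second-difference operator with Neumann conditions at both ends. Its eigenvectors are the discrete cosines $v_k(j)=\cos\bigl(\pi k (2j-1)/(2N)\bigr)$, $j=1,\dots,N$; a short sum-to-product computation on the interior rows, together with a check of the two endpoint rows, gives the eigenvalue $\lambda_k = 2-2\cos(\pi k/N)=4\sin^2\bigl(\pi k/(2N)\bigr)$ for $k=0,1,\dots,N-1$. Because these are already in increasing order, no sorting is needed and
\[
F_{P_N}(x) = \lambda_{[(N-1)x]} = 4\sin^2\!\Bigl(\frac{\pi\,[(N-1)x]}{2N}\Bigr).
\]

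Finally I would let $N=2^m+1\to\infty$. For each fixed $x\in[0,1]$ the discarded fractional part in the floor is bounded by $1$, so $[(N-1)x]/(2N)\to x/2$ and therefore $F_{P_N}(x)\to 4\sin^2(\pi x/2)$ pointwise; since every $F_{P_N}$ is bounded by $4$ on $[0,1]$, dominated convergence promotes this to convergence in $L^1([0,1])$. The theorem guarantees that this $L^1$-limit is the universal $F_1$, so $F_1(x)=4\sin^2(\pi x/2)$.

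Since the path spectrum is explicit, I expect no genuine analytic obstacle; the points that actually require attention are conceptual. First, I must justify that Barycentric refinement of a one-dimensional complex is honest edge subdivision, so that the refinements of $K_2$ are literally paths rather than something more intricate. Second, I must invoke the universality clause of the theorem to conclude that the limit computed along the single sequence $P_{2^m+1}$ is the same function obtained from every other graph of clique number $2$. With those two observations in place, the explicit cosine spectrum does all the remaining work.
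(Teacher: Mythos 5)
Your proposal is correct and follows essentially the same strategy as the paper: invoke the universality clause of the theorem and then compute the limit explicitly on a convenient representative of clique number $2$. The only difference is the representative — you use $G=K_2$, whose refinements are the paths $P_{2^m+1}$ with the Neumann cosine spectrum $4\sin^2\bigl(\pi k/(2N)\bigr)$ (conveniently already sorted), while the paper uses a cycle, whose refinements $C_{4\cdot 2^m}$ have the circulant spectrum $\{4\sin^2(\pi k/n)\}$; both yield $F_1(x)=4\sin^2(\pi x/2)$ by the same limiting argument.
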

\begin{proof}
As the limiting distribution is universal, we can compute it for $G=C_m$, where
$G_m = C_{4 \cdot 2^m}$. As the spectrum of $C_n$ is the set $\{ 4 \sin^2(\pi k/n) \; | \; k=1, \dots, n\}$,
the limit is $F_1$. 
\end{proof}

For $d=1$, the limiting spectral function is related to the {\bf Julia set} of the
{\bf quadratic map} $z \to z(4-z)$ which is conjugated to 
$z \to z^2-2$ ($c=-2$ at bottom tail of Mandelbrot set) or $z \to 4z(1-z)$ 
which is the {\bf Ulam interval map} conjugated to {\bf tent map}, or $z \to 4z^2-1$ 
which is a {\bf Chebyshev polynomial}.

\section{Remarks}

{\bf 1)} For $d=2$ already, we expect spectral gaps. A first large one is observed at 
$x=1/2$. For $G_4$ with $G=K_3$, we see a jump at $0.5$ of $2.002$, 
Starting with $G=K_3$, the graph $G_2$ has $25$ vertices. 
Its Laplacian has eigenvalues for which
$\lambda_{13}-\lambda_{12} =2.0647...\sim 2$ being already close to the gap. 
The eigenvalues can be simplified to be roots of polynomials of degree $4$
for which explicit radical expressions exist allowing to estimate.
We also know $||F_{G_3}- F_2||_1 \leq 8 \sum_{k=3}^{\infty} 1/3^k = 8/18<1/2$.
While we know that $F_{G_m}- F_2$ converges pointwise and uniformly on each 
compact interval we don't have uniform constant bounds on each interval.\\
{\bf 2)} Instead of the Laplacian $L=B-A$, we could take the {\bf adjacency matrix} $A$.
We could also take the {\bf multiplicative Laplacian} $L'=A B^{-1}$ or the to $L'$ isospectral
selfadjoint {\bf random walk Laplacian} $L''=B^{1/2} A B^{1/2}$. The corresponding 
spectral functions always converge exponentially fast, but to different 
limiting functions. For the adjacency matrix for example, the spectral gaps appear much smaller.  \\
{\bf 3)} The convergence also works for the {\bf Dirac operator} $D=d+d^*$, where $d$ is
the {\bf exterior derivative} or the {\bf Hodge Laplacian} $L=D^2$.
For the Dirac operator $D$, for which the density of states is supported on $(-\infty,\infty)$.
For the Hodge Laplacian on $[0,\infty)$. The Hodge Laplacian factors into different form 
sectors $L_k$ and the spectral functions of each 
{\bf form Laplacian} $L_k$ converge. The blocks which make up $D^2$.
The {\bf scalar Laplacian} $L_0 = d^* d$
agrees with the {\bf combinatorial Laplacian} = Kirchhoff Laplacian $L=B-A$ discussed above. \\
{\bf 4)} Any Barycentric refinement preserves the {\bf Euler characteristic} $\chi(G)=$ 
$\sum_{k=0} (-1)^k v_k$. We know that $G_n$ is homotopic and even homeomorphic to $G$.
$G$ and $H$ are {\bf Ivashchenko homotopic} \cite{I94,CYY} if one get $H$ from $G$ by 
homotopy transformation steps done by adding a vertex, connecting it to a 
contractible subgraph, or removing one with a contractible sphere. A topology $\O$ on $V$ of $G$ 
is a {\bf graph topology} if there is a sub-base $\B$ of $\O$ consisting of
contractible subgraphs such that the intersection of any $A,B \in \B$ satisfying 
$\dim(A \cap B) \geq {\rm min}(\dim(A),\dim(B))$
is contractible, and every edge is contained in some $B \in \B$.
We ask the $\G$ of $\B$ to be homotopic to $G$, where the {\bf nerve graph} 
$\G = (\B,\E)$ has edges $\E$ consisting of all pairs $(A,B) \in \B \times \B$ for which
the dimension assumption is satisfied. 
A map between two graphs equipped with graph topologies is {\bf continuous}, if it is a graph 
homomorphism of the nerve graphs such that ${\rm dim}(\phi(A)) \leq {\rm dim}(A)$ for every $A \in \B$.
If $\phi$ has a continuous inverse it is a {\bf graph homeomorphism}. \\
{\bf 5)} We know ${\rm dim}(G_1) \geq {\rm dim}(G)$ with equality for $d$-graphs, graphs for which 
unit spheres are spheres. For a visualization of the discrepancy on random Erd\"os-Renyi graphs, 
see \cite{KnillProduct}. We also know that ${\rm dim}(G_m)$ converges monotonically to the 
dimension of the largest complete subgraph because the highest dimensional simplices dominate
exponentially.   \\
{\bf 6)} If $G$ is a $d$-graph, then for $m>1$, all $G_m$ as well as any finite intersections of 
unit spheres in $G_m$ are all {\bf Eulerian} so that we can define a {\bf geodesic flow} on $G_m$
or a {\bf billiard} on Barycentric refinements of the ball $(K_k)_m$. 
For non-Eulerian graphs like an icosahedron, a light propagation on the vertex set is not defined 
without breaking some symmetry. Also, for any graph and $m>0$, 
the {\bf chromatic number} of $G_m$ is the clique number of $G$. 
Indeed, for $m \geq 1$ the dimension of the complete subgraph graph $x$ of $G_m$ can be taken as the
{\bf color} of the vertex $x$ in $G_{m+1}$. Since the dimension takes values in $\{ 0, \dots, d\}$ the 
chromatic number of $G$ agrees with the clique number $d+1$ of $G$. \\
{\bf 7)} If $d_0 \leq \cdots \leq d_n$ is the {\bf degree sequence} of $G$, define 
the {\bf degree function} $H_G(x) = d_{[x n]}$ in the same way as the spectral function.
Since the degrees are the diagonal elements of $L$,
the {\bf integrated degree function} $\tilde{H}(x) =  \int_0^x H_{G}(t) \; dt$
and the {\bf integrated spectral function} $\tilde{F}(x) = \int_0^x F_G(t) \; dt$
agree at $x=0$ and $x=1$. By the {\bf Schur inequality}, we have
$\tilde{H}(x) \geq \tilde{F}(x)$. The degree function $H_{G_m}$ also converges. \\
{\bf 8)} Barycentric refinements are usually defined for realizations
of a {\bf simplicial complex} in Euclidean space. The Barycentric subdivision of a graph is
not the same than what is called a {\bf simplex graph} \cite{BandeltVandeVel}
as the later contains the empty graph as a node. It also is not the same than the
{\bf clique graph} \cite{Hamelink68} as the later has as vertices the maximal complete 
subgraphs, connecting them if they have a non empty intersection. 
The Barycentric refinement is the {\bf graph product} of $G$ with $K_1$ \cite{KnillProduct}. 
The product $G \times H$ has as vertex set the set of all pairs $(x,y)$, where $x$ 
is a complete subgraph $x$ of $G$ and a complete subgraph $y$ of $H$. 
Two such vertices $(x,y)$ and $(u,v)$ are connected by an edge, if 
either $x \subset u$ and $y \subset v$ or $u \subset x$ and $y \subset v$.  \\
{\bf 9)} The limit of Barycentric refinements is ``{\bf flat}" in the following sense:
the graph {\bf curvature} $K(x)=1-V_0(x)/2+V_1(x)/3- \cdots$ with $\vec{V}(x)=(V_0(x),V_1(x),\dots)$ 
being the clique data of the unit sphere $S(x)$ is defined for all finite simple graphs \cite{cherngaussbonnet} 
satisfying the {\bf Gauss-Bonnet-Chern theorem} adds up to the Euler characteristic of the original graph. 
Because the total curvature is $1$, and as the graph gets larger, the curvature 
averaged over some subgraph $G_{m-k}$ of $G_m$ goes to zero,
while the individual curvatures can grow indefinitely. For $d=2$ for example, where the
curvature is $K(x)=1-d(x)/6$, there are vertices with very negative curvature, but 
averaging this over a smaller patch gives zero. In general, if we look at subgraphs of $G$
which are wheel graphs as $2$-dimensional sections, also the {\bf sectional curvatures} become
unbounded at some points but averages of sectional curvatures over two dimensional surfaces
obtained by Barycentric refinements of a wheel graph go to zero. The limiting holographic object
looks like Euclidean space. There is even {\bf ``rotational symmetry"}, not everywhere, but with centers
at a dense subset of the continuum. \\
{\bf 10)} The graph theoretical subdivision definition uses the point of view that  
complete subgraphs of a graph are treated as {\bf points}. When this identification is taken
seriously and iterated, we get a {\bf holographic Barycentric refinement sequence} which
is already realized in the graph itself. This is familiar when building up number systems: if the set of 
integers $G=Z$ is considered to be a graph with adjacent integers connected, 
the Barycentric refinements $G_n$ contain all {\bf dyadic rationals} $k/2^n$.
Modulo $1$, this is a {\bf Pr\"ufer group} $\PP_2$ which has as the {\bf Pontryagin dual} the compact 
topological group $\D_2$ of dyadic integers, a subring of the {\bf field} $\Q_2$ of {\bf 2-adic numbers}.
In the case $d=1$, there is a limiting random operator on the group of {\bf dyadic integers}.  \\
{\bf 11)} There is an analogy between $(\R,\T,\Z, x \to x+\alpha \; {\rm mod} \; 1,x \to 2x \; {\rm mod} \; 1)$
and $(\Q_2,\PP_2,\D_2, x \to x+1,\sigma)$, where $x \to x+1$ is the addition on the compact topological group. 
This translation on $\D_2$ is called {\bf adding machine} \cite{Friedman} and $\sigma$ is shift which is 
a return map on half of $\D_2$. There is a natural way to get the group $\D_2$ through {\bf ergodic theory} as it is
the unique fixed point of 2:1 {\bf integral extensions} in the class of all dynamical system. It is
called also the {\bf von Neumann-Kakutani system} and usually written as an interval map on $[0,1]$. 
The ergodic theory of systems with discrete spectrum is completely understood \cite{CFS,DGS} and
the von-Neumann-Kakutani system belongs so to a class of systems, where we can solve the {\bf dynamical log-problem}: 
given any $x,y$ and $\epsilon>-$ find $n$ such that $d(T^n x,y) < \epsilon$ which is essential 
everywhere in dynamics, from prediction of events up to finding solutions to Diophantine equations.
Such systems are typically {\bf uniquely ergodic} 
and so by spectral theory naturally conjugated to a group translation on a compact topological group,
the dual group of the eigenvalues of the {\bf Koopman operator} on the unit circle in the complex plane. 
There is an important difference between the real and 2-adic story: while in both cases, the chaotic scaling 
systems are isomorphic {\bf Bernoulli systems} on compact Abelian topological groups, the group translation on 
the 2-adic group is naturally unique and {\bf quantized}, while on the circle, there are many group 
translations $x \to x+ \alpha$. In the {\bf real picture}, there is a continuum of natural translations, in the 
{\bf dyadic picture}, there is a {\bf smallest translation}. The picture is already
naturally quantum. Egyptian dyadic fractions, music notation or
the transition from Fourier to wavelet theory can be seen as a move towards {\bf dyadic models}. 
See \cite{TaoDyadic} for a plethora of other places. \\
{\bf 12)} For a $d$-graph $G$ with $d>1$, the renormalization limit of $G$ is
still unidentified. The limiting operator is likely a random operator on a compact topological
group. As the group of dyadic integers, it is likely also a {\bf solenoid}, an 
inverse limit of an inverse system of topological groups. There is a group acting on it producing
the operator in a crossed product $C^*$-algebra. This is at least the picture in one dimensions. \\
{\bf 13)} As $F_{G_m}$ converges in $L^1$, we have convergence of the 
{\bf integrated density of states} $F_{G_m}^{-1}$ in $L^1[0,\infty)$ and so
weak-* convergence of the derivative, the {\bf density of states}, in the Banach space of measures. 
In the case $d=1$, the density of states is $f(x) = (x(4-x))^{-1/2}/\pi$ supported on $[0,4]$.
The integrated density of states is $F^{-1}(x) = (2/\pi) \arcsin(\sqrt{x}/2)$.
The measure $\mu = f(x) 1_{[0,4]} dx$ is the {\bf equilibrium measure} on the Julia set of
$T(z) = 4z-z^2$ whose dynamic is conjugated to $z \to z^2-2$ in the 
Mandelbrot picture or the {\bf Ulam map} $z \to 4x(1-x)$ for maps on the interval $[0,1]$.
The spectral function $F$ satisfies $T(F(x))=F(2x)$.
The measure $\mu$ maximizes metric entropy and equals it to topological entropy 
$\log(2)$ of $T$. \\
{\bf 14)} In the case $d=2$, the numbers $v_k$ were already known \cite{Snyder,hexacarpet}
as $v_2(G_m)=6^m$ and $v_0$ is the sum of $v_0(m-1),v_1(m-1),v_2(m-1)$
and $v_0-v_1+v_2=2$ leading to formulas like $v_0(G_m) = 1-3(2^{m-1}+2^m+2^{m-1} 3^{m-1})$,
$v_1(G_m) = 3(-2^{m-1}+2^m+2^{m-1} 3^m)$. As the lemma shows, in general, the clique data of $G_1$ are 
a linear image of the clique data of $G$ with a linear map $A$ independent of $G$. 
Since $A$ has a compact inverse, we can look at an eigenbasis of $A$ and could write down
explicit formulas for the number of vertices of $G_n$, if the initial clique data of $G$ are known. \\
{\bf 15)} If $H$ is a subgraph of $G$, then each refinement $H_m$ is a subgraph of $G_m$. 
Also, $G_k$ is a subgraph of $G_m$ if $k \leq m$ and
the automorphism group of $G_m$ contains the automorphism group of $G$.
The case $G=K_n$ shows that the {\bf automorphism group} can become larger. In general,
${\rm Aut}(G_1)={\rm Aut}(G)$. It is only in 
rare cases like $G=C_n$ that ${\rm Aut}(G_m)$ can grow indefinitely. \\
{\bf 16)} The matrix $A$ mapping the clique data of $G$ to the clique data of $G_1$
is a special case of the following: for any graph $H$, there is a linear map $A_H$
which maps the clique data of any graph $G$ to the clique data of $G \times H$. 
This operator $A_H$ depends only on $H$. While $A=A_{K_1}$ is invertible on each 
class of finite dimensional graphs. The spectral data $v(G)$ do not determine the 
graph $G$ as trees already show, the Barycentric refinement operator $T$ is invertible
on the image $G(\Gcal)$ of the class $\Gcal$ of all finite simple graphs. \\
{\bf 17)} Barycentric refinements make graphs nicer: the chromatic number is the clique number, 
the graphs are Eulerian. If we identify graphs with their
Barycentric refinements, we have a {\bf holographic picture}. There is a natural
geodesic flow on $G_1$ if $G$ is a $d$-graph as each every unit sphere has a 
natural involution, defined inductively by running the geodesic flow on the unit
sphere. On a two dimensional sphere for example, the flow is defined because every
vertex has even degree. This allows to draw {\bf straight lines} on it and to define an
antipodal map. For a 3-sphere, because every unit sphere has such an antipodal map, we can 
define straight lines there, in turn leading to an antipodal map on such spheres. This can be 
continued by climbing up the dimensions to get a geodesic flow on the graph $G_1$ itself. \\
{\bf 18)} The density of states obtained as a limiting spectral density of finite dimensional
situations is central in the theory of {\bf random Schr\"odinger operators} $L$ 
\cite{Cycon,Carmona,Pastur}, where the density of states
$\mu$ can be defined as the functional $f \to {\rm tr}(f(L))$ as $L$ is an element in a 
von Neumann algebra with trace, the crossed product of $L^\infty(\Omega)$ with an
ergodic group action on the probability space. The {\bf Birkhoff ergodic theorem} has been 
applied by Pastur to assure in that theory that the density of states of finite dimensional operators 
defined by orbits starting at some point $x$ converges {\bf almost surely} to the measure $\mu$. 
The $1$-dimensional discrete case $d=1$, where a single ergodic transformation $T$ on a probability 
space is given is one of the most studied models, 
in particular the map $x \to x+ \alpha \; {\rm mod} \; 1$ leading to almost
periodic matrices like the {\bf almost Mathieu operator}. 
It is the real analog of the limiting operator which is almost periodic over the 
dyadic integers.
There is no doubt that also in higher dimensions, the limiting model of Barycentric refinement 
is part of the theory of 
aperiodic almost periodic media \cite{Pastur,Cycon,Pastur,Senechal,Simon82,KellendonkLenzSavinien}.

\section{Open ends}

{\bf 1)} Already for $d=2$, we do not know what the nature of $F_d$ is. One must suspect that
there is a renormalization picture, as in one dimensions. It can not be a Julia set of a 
polynomial, as the spectrum becomes unbounded. We suspect that for $d>1$, the time of the 
group action is higher dimensional and that the limiting Laplacian remains almost periodic. 
Since we can not compute $F_2(G_m)$ yet for large $m$ due to the fast growth of $G_m$, we see
experimentally that the function $F_2$ on $L^1([0,1])$ has a {\bf self-similar nature} in the sense
that $F_2(6x) \sim  \phi_2(F_2(x))$ for some $\phi_2$ and $x \in [0,1/6]$.
This self-similar nature is present in $d=1$ as 
$F_1(2x) = \phi_1(F_1(x))$, where $\phi_1$ is the {\bf quadratic map} $\phi_1(x) = x(4-x)$. 
Since $F_2(1)$ is infinite, the map $\phi_2$ (if it exists at all) must be an unbounded 
function and in particular can not be a polynomial. \\

{\bf 2)} In order to establish an existence of a {\bf spectral gap} for $F_2$, 
one could first establish an exact finite approximation bound, then estimate the rest. 
For $d=2$, the first large gap opens at $x=1/2$. This could be related to the fact that 
every step doubles the vertex degrees in two dimensions. Here are the largest spectral jump
values for $G_5$ if $G=K_3$: $\Delta(0.5)=2.002..$, $\Delta(0.835366)=1.67669...$
$\Delta(0.917683)=2.86249$, $\Delta(0.972561)=3.89394$, $\Delta(0.98628)=6.93379$
$\Delta(0.995427)=7.96794$ and $\Delta(0.998476)=14.9767$. To investigate gaps, we would 
need a Lidskii type estimate for the smaller half of the eigenvalues. \\

{\bf 3)} We see experimentally that for $d=2$, the sequence $F_{G_m}$ appears monotonically 
increasing $F_{G_{m}} \leq F_{G_{m+1}}$ at least for the small $m$ in which we can compute it.
In the case $d=1$, the monotonicity is explicit. 
It fails for smaller $m$ and $d=3$ but could hold for any $d$ if $m$ is large enough.
If we had more details about the convergence in the middle of the spectrum, we could attack
the problem of verifying that spectral gaps exist. \\

{\bf 4)} Unlike the {\bf Feigenbaum renormalization} in one dimensional dynamics 
\cite{Feigenbaum1978,Lanford84,DeMelo}, where a hyperbolic attractor with stable and unstable 
manifold exists, the Barycentric renormalization is a contraction and the convergence
proof is more elementary. More so than the {\bf central limit theorem} in probability
theory, where the renormalization map is
$X \to \overline{X_1+X_2}$, where $X_i$ are independent random variables on a probability 
space $\Omega$ with the same distribution
than $X$ and $\overline{X}=(X-{\rm E}[X])/\sigma(X)$. That random variable 
renormalization is not a uniform contraction in $L^2(\Omega,{\rm P})$. 
For $d=1$ there is a limiting almost periodic operator where the compact topological 
{\bf group of dyadic integers} becomes visible in the Schr\"odinger case. We expect also in higher 
dimensions, a random limiting operator and that the renormalization
map is robust in the sense that one can for example shift the energy $E$ and 
have a deformed attractor. In the one-dimensional Schroedinger case, where one deals with 
Jacobi matrices, one gets so to spectra on Julia sets $J_E$ of the quadratic map 
$T(x) = z^2+E$ \cite{BGH,BBM,Kni93a,Kni93b,Kni95}. 
Also in higher dimensions, we expect that a Schr\"odinger renormalization picture reveals the
underlying topological group if an energy parameter is used to modify the renomalization. \\

{\bf 5)} In the case $d=1$, the roots of the {\bf Dirac zeta function} 
$\zeta(s) = \sum_{\lambda>0} \lambda^{-s}$, defined by the positive eigenvalues
$\lambda$ of the Dirac operator $D=d+d^*$ \cite{KnillILAS,McKeanSinger} of $G_{m}$,  
converges to a subset of the line ${\rm Re}(s)=1$. What happens in the case $d=2$?
Already for $d=1$, the convergence of the roots is slow of the order $\log(\log(v_0(G))$
as it is initially proportional to $m$ and slowing down exponentially when approaching the line.
Any experimental investigations in $d=2$ would be difficult as no explicit formulas for the 
eigenvalues exist. \\

{\bf 6)} It would be nice to know more about the linear operators $A_H$ which 
belong to the map $G \to G \times H$ on $\Gcal$. Their inverse is compact. 
In the case $H=K_1$, where we get
the Barycentric refinement operator $A$, the eigenvector of $A^T$ to the eigenvalue $1$
gives an invariant for Barycentric refinement: it is the Euler characteristic vector
$(1,-1,1,-1,1,-1, \dots)$. When restricted to $d$-graphs, the vector $(0,0,...,1)$
with $1$ at the $d+1$'th entry is an eigenvector to the eigenvalue $(d+1)!$. It leads
to a counting invariant in the limit which is called {\bf volume}. The other eigenvectors
lead to similar limits which must be linear combinations of {\bf valuations} in Riemannian geometry
\cite{Santalo,KlainRota}. We see that the even {\bf Barycentric invariants} 
$X_{2k}(G) = \vec{f}_{2k}(A) \cdot \vec{v}(G)$ are constant zero on $d$-graphs, where
$\vec{f}_k(A)$ are the eigenvectors of $A^T$. It will imply for example that for any 
compact smooth 4-manifold containing $v$ vertices, $e$ edges, $f$ triangles, $g$ tetrahedra,
and $h$ pentatopes satisfies $22e+40g=33f+45h$. 
In the continuum, the more general operator $A_H$ leads to a linear map on valuations. 
These maps will help to investigate the connection between the discrete and continuum.  \\

{\bf 7)} In the same way as for Riemannian manifolds, where one studies the eigenfunctions
$f_k$ of the Laplacians, the eigenfunctions of the Kirchhoff Laplacians will play an important
role for understanding the limiting density of states.
The  {\bf Chladni figures} are the level sets $f_k=0$. 
They can be defined for $d$-graphs. We can look at $f=0$. It is by a discrete Sard result \cite{KnillSard}
a $(d-1)$-graph as long as $0$ is not a value taken by $f$. If $0$ is a value taken we
just can plot $f=\epsilon$ for $0<\epsilon$ small. The point is that we can so define
nice $(d-1)$ graphs associated to $f_k$. The topology of these {\bf Chladni graphs} depends
very much on the energy as in the continuum and as in the continuum, are not well understood 
yet. \\

{\bf 8)} The even eigenvectors $\vec{v}_{2k}$ of $A^T$ appear to have the property that 
$X_{2k}(G) = \vec{v}_{2k} \cdot \vec{v}(G)=0$ for any $2d$-graph. If true, this leads 
to integral geometric invariants in the limit. For any graph, define 
$X_{2k}(G) = \lim_{m \to \infty} \vec{v}_{2k} \vec{v}(G_m)/\lambda_{2k}^{-m}$. The limit is trivial as the right
hand side is constant. For $d$-graphs, we see $X_{2k}(G)=0$. We have tried random versions of 
$S^4,S^2 \times S^2, T^4,S^2 \times T^2, S^3 \times T^1,S^6$.
The limiting integral theoretic invariants for $d$-manifolds would be zero for differentiable manifolds.
Assume we would find a graph $G$ with uniform dimension $d$ which is not a $d$-graph but which is homeomorphic     
to a $d$-graph $H$. Then, since Barycentric refinements preserve the homeomorphism relation,
the Barycentric limit $M$ of $G$ is a topological manifold homeomorphic to the Barycentric
limit $N$ of $H$. While $M,N$ are topological manifolds which are homeomorphic, 
they can not be diffeomorphic, as integral geometric integer valued invariants are diffeomorphism invariants.
A basis for valuations can be obtained if $M$ is embedded in a projective
$n$-sphere. We can compute the expectation of the $k$-volume of a random $m$-planes with $M$
using a natural probability measure obtained from Haar measure on $SO(n)$ acting
on $k$-planes. This leads to $d+1$ invariants, for which some linear combinations is the 
Euler characteristic. The graph theoretical invariants obtained from $A$ could produce invariants allowing
to distinguish homeomorphic but not diffeomorphic manifolds.

\section{About the literature} 

Barycentric refinement are of central importance 
in topology. Dieudonn\'e \cite{Dieudonne1989} writes of 
{\it "the three essential innovations that launched 
combinatorial topology: simplicial subdivisions by the barycentric method, the use
of dual triangulation and, finally, the use of incidence matrices and of their
reduction."} In algebraic topology, it is used for
proving the {\bf excision theorem} or the {\bf simplicial approximation
theorem} \cite{Hatcher,Rotman}. In topology, Barycentric subdivision is primarily used in 
an Euclidean setting for subdividing complex polytops or CW complexes.
For abstract simplicial complexes, it is related to flag complexes even so
there are various inequivalent definitions of what simplicial subdivision or simplex
graph or Barycentric subdivisions are. In graph theory, subdivisions
are considered for simplicial complexes which are special {\bf hypergraphs} in topological
graph theory \cite{TuckerGross}. Indeed, most graph theory treats graphs as 
one-dimensional simplicial complexes, ignoring the {\bf Whitney complex} of all complete 
subgraphs. Subdivisions classically considered for graphs only agree with the definition used here if
the graph has no triangles. Two graphs are {\bf classically homeomorphic}, if they have isomorphic
subdivisions ignoring triangles (see e.g. \cite{Bollobas1,HHM,BR}). In the context of {\bf maps} which 
are finite cell complexes whose topological space is a surface $S$, Barycentric subdivisions
are considered for this cell complex \cite{handbookgraph} but not for the graph.
In the context of convex polytops, Barycentric subdivision appear for the {\bf order complex}
of a polytop which is an abstract simplicial complex \cite{McMullenSchulte}. \\

Resources on the spectral theory of graph are in 
\cite{Chung97,Mieghem,Brouwer,VerdiereGraphSpectra,Post,BLS}. 
It parallels to a great deal the corresponding theory for Riemannian manifolds
\cite{Chavel,Rosenberg,BergerPanorama}. \\

The definition of $d$-spheres and homotopy are both due to Evako. Having developed the 
sphere notion independently in \cite{KnillEulerian,KnillProduct} we realized in
\cite{KnillJordan} the earlier definition of Ivashchenko=Evako
\cite{I94a,I94,Evako1994,Evako2013}. The definition of these Evako spheres is 
based on Ivashchenko homotopy which is homotopy notion inspired by Whitehead 
\cite{Whitehead} but defined for graphs. \\

The Barycentric refinements for $d=2$ are studied in \cite{hexacarpet}, where the 
limit of Barycentric refinement has a dual called the {\bf hexacarpet}. 
Figure~{7)} in that paper shows
the eigenvalue counting function in which gaps in the eigenvalues are shown similarly
as in the spectral function $F_G(x)$. The work \cite{hexacarpet} must therefore 
be credited for the experimental discovery of the gap.  \\

Papers like \cite{DiaconisMiclo,Hough} deal with 
the geometry of tessellations of a triangle, which defines a random walk leading to 
a dense subgroup of $SL(2,R)$ which defines a Lyapunov exponent.
While the focus of those papers is different, there might be relations. \\

For illustrations, more motivation and background, see also our first 
write-up \cite{KnillBarycentric}.

\section{Figures and Code}

\begin{figure}
\scalebox{0.25}{\includegraphics{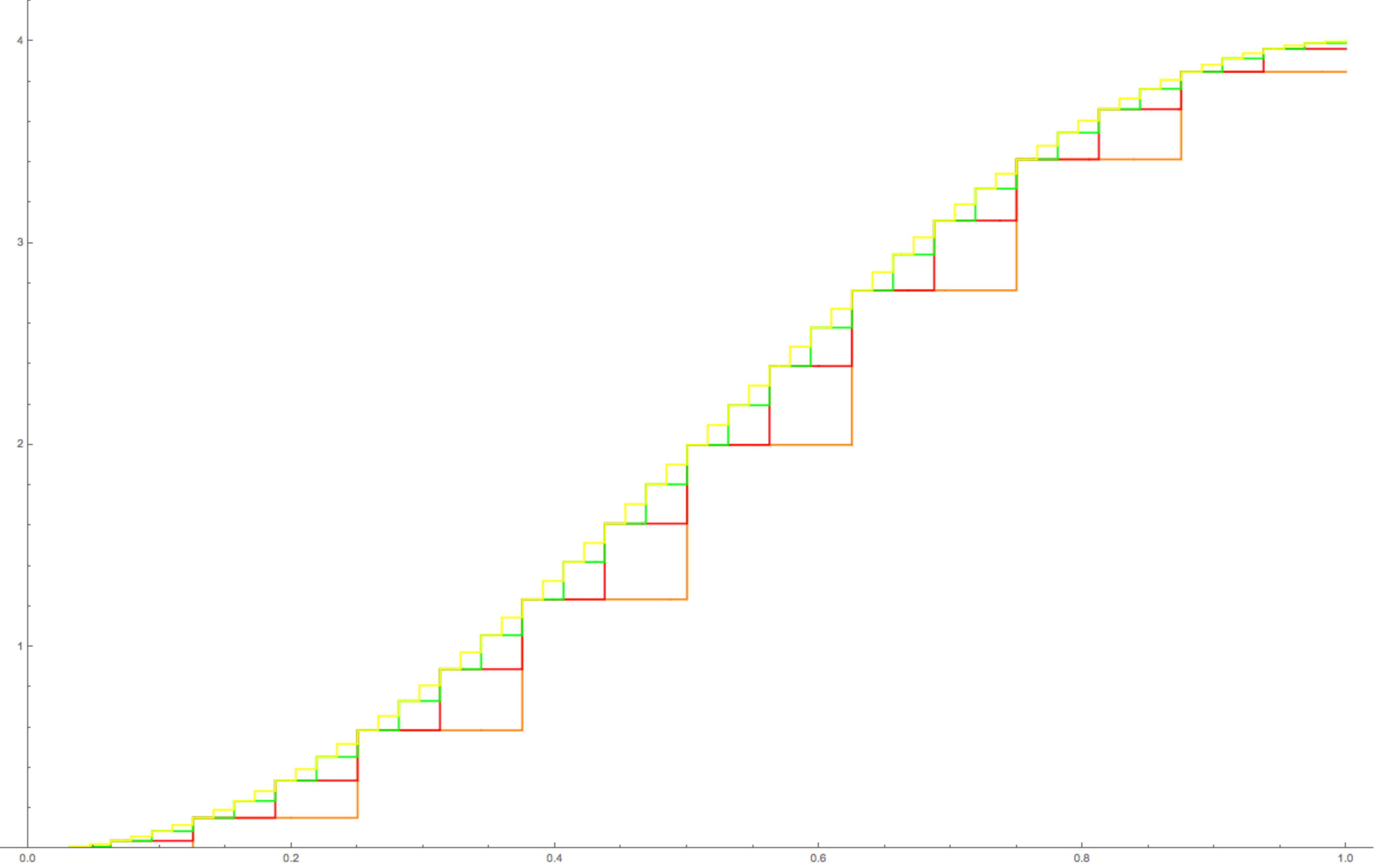}}
\scalebox{0.25}{\includegraphics{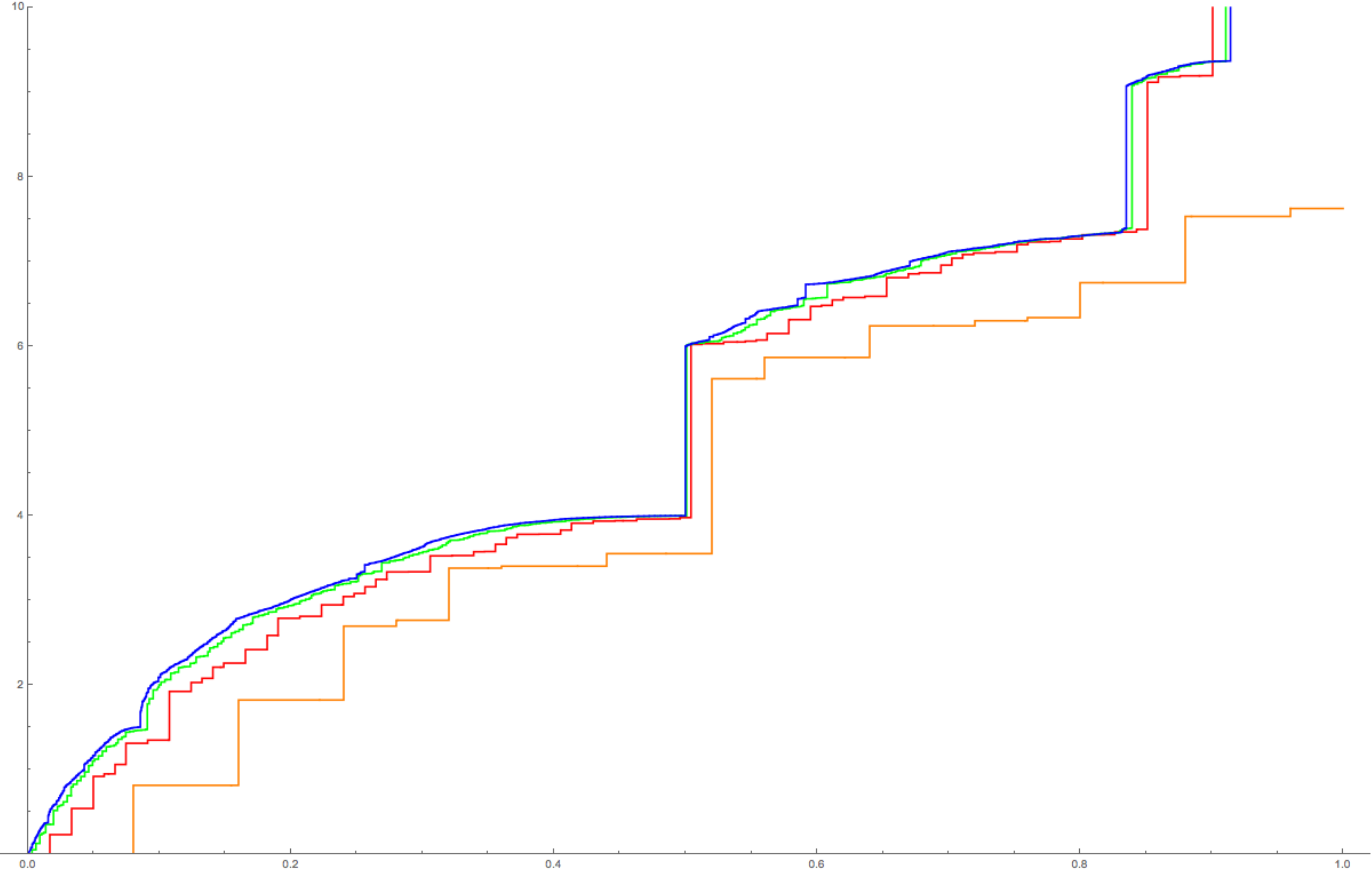}}
\caption{
The functions $F_{G_m}$ for $G=C_4$ converge to the limiting function $4 \sin^2(\pi x/2)$. 
The functions $F_{G_m}$ for $G=K_3$ converge to a limiting function which appears to have 
jumps corresponding to gaps in the spectrum. We have here only established that the universal
limit $F$ exists, but already for $d=2$ do not know about the nature of the limit. 
}
\end{figure}

Here are the Mathematica routines which produced the graphs: 

\vspace{12mm}

\begin{tiny}
\lstset{language=Mathematica} \lstset{frameround=fttt}
\begin{lstlisting}[frame=single]
TopDim=2; 
Cl[s_,k_]:=Module[{n,t,m,u,q,V=VertexList[s],W=EdgeList[s],l},
 n=Length[V]; m=Length[W]; u=Subsets[V,{k,k}]; q=Length[u]; l={};
 W=Table[{W[[j,1]],W[[j,2]]},{j,m}];If[k==1,l=Table[{V[[j]]},{j,n}],
 If[k==2,l=W,Do[t=Subgraph[s,u[[j]]]; If[Length[EdgeList[t]]==
 Binomial[k,2],l=Append[l,VertexList[t]]], {j,q}]]];l];
Ring[s_,a_]:=Module[{v,n,m,u,X},v=VertexList[s]; n=Length[v];
 u=Table[Cl[s,k],{k,TopDim+1}] /. Table[k->a[[k]],{k,n}];m=Length[u];
 X=Sum[Sum[Product[u[[k,l,m]],
   {m,Length[u[[k,l]]]}],{l,Length[u[[k]]]}],{k,m}]];
GR[f_]:=Module[{s={}},Do[Do[If[Denominator[f[[k]]/f[[l]]]==1 && k!=l,
 s=Append[s,k->l]],{k,Length[f]}],{l,Length[f]}];
 UndirectedGraph[Graph[s]]];
GraphProduct[s1_,s2_]:=Module[{f,g,i,fc,tc},
 fc=FromCharacterCode; tc=ToCharacterCode;
 i[l_,n_]:=Table[fc[Join[tc[l],IntegerDigits[k]+48]],{k,n}];
 f=Ring[s1,i["a",Length[VertexList[s1]]]];
 g=Ring[s2,i["b",Length[VertexList[s2]]]]; GR[Expand[f*g]]];
NewGraph[s_]:=GraphProduct[s,CompleteGraph[1]];
Bary[s_,n_]:=Last[NestList[NewGraph,s,n]];
Laplace[s_,n_]:=Normal[KirchhoffMatrix[Bary[s,n]]];
F[s_,n_]:=Module[{u,m},u=Sort[Eigenvalues[1.0*Laplace[s,n]]];
  m=Length[u]; Plot[u[[Floor[x*m]]],{x,0,1}]];
TopDim=1; Show[Table[F[CycleGraph[4],k],{k,3}],PlotRange->{0,4}]
TopDim=2; Show[Table[F[CompleteGraph[3],k],{k,3}],PlotRange->{0,10}]
\end{lstlisting}
\end{tiny}

And here is the recursive computation of the matrix $A$ which gives the 
clique data $A \vec{v}$ of the Barycentric refinement of $G_1$ 
if $\vec{v}$ is the clique vector of $G$. The computation produces the 
clique data of the boundary of $(K_n)_1$ which allows to compute the 
number of interior $k$-simplices producing the off diagonal matrix 
elements not in the top row. 

\begin{tiny}
\lstset{language=Mathematica} \lstset{frameround=fttt}
\begin{lstlisting}[frame=single]
BarycentricOperator[m_]:=Module[{},
b[A_]:=Module[{n=Length[A],c},c=A.Table[Binomial[n+1,k],{k,n}];
    Delete[Prepend[c,1],n+1]];
T[A_]:=Append[Transpose[Append[Transpose[A],b[A]]],
    Append[Table[0,{Length[A]}],(Length[A]+1)!]];
Last[NestList[T,{{1}},m]]]; BarycentricOperator[7] 
\end{lstlisting}
\end{tiny}

$$ A = \left[
                 \begin{array}{cccccccc}
                  1 & 1 & 1 & 1 & 1 & 1 & 1 & 1 \\
                  0 & 2 & 6 & 14 & 30 & 62 & 126 & 254 \\
                  0 & 0 & 6 & 36 & 150 & 540 & 1806 & 5796 \\
                  0 & 0 & 0 & 24 & 240 & 1560 & 8400 & 40824 \\
                  0 & 0 & 0 & 0 & 120 & 1800 & 16800 & 126000 \\
                  0 & 0 & 0 & 0 & 0 & 720 & 15120 & 191520 \\
                  0 & 0 & 0 & 0 & 0 & 0 & 5040 & 141120 \\
                  0 & 0 & 0 & 0 & 0 & 0 & 0 & 40320 \\
                 \end{array}
                 \right]  \; . $$

The procedure produces finite dimensional versions of this matrix which 
matter when looking for interesting quantities on $\G_d$. For $d=2$ for example, we have 
$\left[ \begin{array}{ccc} 1 & 1 & 1 \\ 0 & 2 & 6 \\ 0 & 0 & 6 \\ \end{array} \right]$
whose transpose has the eigenvectors $[1,-1,1]^T$ (Euler characteristic), the 
eigenvectors $[0,-2,3]^T$, a well known invariant for $2$-graphs which is proportional
to the boundary for $2$-graphs with boundary as well as $[0,0,1]^T$ which is
area. For $d=3$, where $A=\left[ \begin{array}{cccc} 1 & 1 & 1 & 1 \\
                   0 & 2 & 6 & 14 \\ 0 & 0 & 6 & 36 \\ 0 & 0 & 0 & 24 \\ \end{array} \right]$,
the eigenvectors besides the Euler characteristic vector $[1,-1,1,-1]^T$ to the eigenvalue $1$
and volume $[0,0,0,1]^T$ to the eigenvalue $24$, there is $[0,22,-33,40]^T$ with 
eigenvalue $2$ and $[0,0,0-1,2]^T$ with eigenvalue $6$. The later gives an invariant which is
zero for $3$-graphs as in general $[0,\dots 0,-2,d+1]$ is an invariant for $d$-graphs.
We see experimentally that {\bf for any $d$-graph,the eigenfunction of $A^T$ to each eigenvalue $(2k)!$ 
is perpendicular to the clique vector $\vec{v}$}. For example, there is a discrete $P^2 \times S^2$ with 
clique vector $\vec{v}=[1908, 26520, 87020, 104010, 41604]$ which is
perpendicular to $\vec{v}_4=[0, -22, 33, -40, 45]$. The invariant does not change under
{\bf edge refinement modifications} which are homotopies preserving
$d$-graphs. The invariants $X_{2k}(G) = \vec{v}(G) \cdot \vec{v}_{2k}$ remain zero under such homotopies.
They are currently under investigation. It looks promising that integral theoretical methods like
\cite{indexformula,colorcurvature} for generalized curvatures allow to prove the invariants to be zero.
It would be useful also to know whether any two homeomorphic $d$-graphs have a common refinement when
using Barycentric or edge refinements. This looks more accessible than related questions for 
triangulations as $d$-graphs can be dealt with recursively and refinements of unit spheres 
carry to refinements of the entire graph. 
 
\bibliographystyle{plain}

\end{document}